\newtheorem{thm}{Theorem}[section]
\newtheorem{lem}[thm]{Lemma}
\newtheorem{rem}[thm]{Remark}
\theoremstyle{definition}
\newcommand{\scr}[1]{\mathscr #1}
\definecolor{wco}{rgb}{0.5,0.2,0.3}
\numberwithin{equation}{section} \theoremstyle{remark}
\newcommand{\ua}{\uparrow}
\title{{\bf    Approximation of SPDEs with H\"older Continuous Drifts} }
\author{
{\bf  Jianhai Bao$^{b,c)}$,  Xing Huang$^{a)}$, Chenggui Yuan$^{c)}$}\\
\footnotesize{$^{a)}$Center of Applied Mathematics, Tianjin
University, Tianjin 300072, China}\\
\footnotesize{$^{b)}$School of Mathematics and Statistics, Central
South
University, Changsha 410083, China}\\
\footnotesize{$^{c)}$Department of Mathematics, Swansea University,
Singleton Park, SA2 8PP, UK}}
\begin{document}
\allowdisplaybreaks
\def\R{\mathbb R}  \def\ff{\frac} \def\ss{\sqrt} \def\B{\mathbf
B}
\def\N{\mathbb N} \def\kk{\kappa} \def\m{{\bf m}}
\def\ee{\varepsilon}\def\ddd{D^*}
\def\dd{\delta} \def\DD{\Delta} \def\vv{\varepsilon} \def\rr{\rho}
\def\<{\langle} \def\>{\rangle} \def\GG{\Gamma} \def\gg{\gamma}
  \def\nn{\nabla} \def\pp{\partial} \def\E{\mathbb E}
\def\d{\text{\rm{d}}} \def\bb{\beta} \def\aa{\alpha} \def\D{\scr D}
  \def\si{\sigma} \def\ess{\text{\rm{ess}}}
\def\beg{\begin} \def\beq{\begin{equation}}  \def\F{\scr F}
\def\Ric{\text{\rm{Ric}}} \def\Hess{\text{\rm{Hess}}}
\def\e{\text{\rm{e}}} \def\ua{\underline a} \def\OO{\Omega}  \def\oo{\omega}
 \def\tt{\tilde} \def\Ric{\text{\rm{Ric}}}
\def\cut{\text{\rm{cut}}} \def\P{\mathbb P} \def\ifn{I_n(f^{\bigotimes n})}
\def\C{\scr C}   \def\G{\scr G}   \def\aaa{\mathbf{r}}     \def\r{r}
\def\gap{\text{\rm{gap}}} \def\prr{\pi_{{\bf m},\varrho}}  \def\r{\mathbf r}
\def\Z{\mathbb Z} \def\vrr{\varrho} \def\ll{\lambda}
\def\L{\scr L}\def\Tt{\tt} \def\TT{\tt}\def\II{\mathbb I}
\def\i{{\rm in}}\def\Sect{{\rm Sect}}  \def\H{\mathbb H}
\def\M{\scr M}\def\Q{\mathbb Q} \def\texto{\text{o}} \def\LL{\Lambda}
\def\Rank{{\rm Rank}} \def\B{\scr B} \def\i{{\rm i}} \def\HR{\hat{\R}^d}
\def\to{\rightarrow}\def\l{\ell}\def\iint{\int}
\def\EE{\scr E}\def\no{\nonumber}
\def\A{\scr A}\def\V{\mathbb V}\def\osc{{\rm osc}}
\def\BB{\scr B}\def\Ent{{\rm Ent}}
\def\U{\scr U}\def\lf{\lfloor}
\def\rf{\rfloor}\def\1{\lesssim}\def\8{\infty}

\maketitle

\begin{abstract}
In this paper, exploiting the regularities of the corresponding
Kolmogorov equations involved we investigate strong convergence of
exponential integrator scheme for a range of stochastic partial
differential equations, in which the drift term is H\"older
continuous, and reveal the rate of convergence.

\end{abstract} \noindent
 AMS subject Classification:\  60H15, 35R60, 65C30.   \\
\noindent
 Keywords: Mild solution; Exponential integrator scheme; H\"older
 continuity; Regularization transformation
 \vskip 2cm

\section{Introduction}
The numerical approximation of stochastic partial differential
equations (SPDEs) has been a very active field of research. Due to
the infinite dimensional nature of the driving noise processes, in
order to be able to simulate  a numerical approximation on a
computer, both temporal discretization and spatial discretization
are  implemented. The temporal discretization is achieved generally
by Euler type approximations,  Milstein type approximations, and
splitting-up method (see, e.g.,
\cite{BL,D11,DJM,G03,H03,JR15,LCP,S99}), and the spatial
discretization
 is in general done by finite element, finite difference and spectral
Galerkin methods (see, e.g., \cite{J11,K}). Also, there are some
alternative schemes (for example, stochastic Taylor expansion
\cite{JK11}) to approximate SPDEs.  In contrast to substantial
literature on approximations of SPDEs with regular coefficients, the
counterpart for SPDEs with irregular terms (e.g., H\"older
continuous drifts) is scarce. Whereas, our goal in this paper is to
make an attempt to discuss strong convergence of an exponential
integrator (EI) scheme, coupled with a Galerkin scheme for the
spatial discretization (see \eqref{w4} and \eqref{eq6} below), for a
class of SPDEs with H\"older continuous drifts. With regard to
convergence of EI scheme for SPDEs with smooth drift coefficients,
we refer to \cite{JK,KLNS,GR,GT} for further details, to name a few.
Also, there is a number of literature on approximation of SPDEs with
non-globally Lipschitz continuous nonlinearities; see, for instance,
  \cite{GS,JP}.

Let
$(\mathbb{H},\langle\cdot,\cdot\rangle_\mathbb{H},\|\cdot\|_\mathbb{H})$
be  a real separable Hilbert space. Denote by $\mathscr{L}(\H)$
(resp. $\mathscr{L}_2(\H)$)
 the space  of all bounded linear operators (resp. Hilbert-Schmidt operators) on $\H$.  Let $\|\cdot\|$ (resp.
$\|\cdot\|_{\mathscr{L}_2}$) stand for the operator
 norm (resp. the Hilbert-Schmidt norm).
Let $(W_t)_{t\ge0}$ be an $\H$-valued cylindrical Wiener process
defined on a complete probability space
$(\OO,\F,(\F_t)_{t\ge0},\P)$, i.e.,
$W_t=\sum_{k=1}^{\infty}{\bb_t^{(k)}e_k},$ where
$(\bb^{(k)})_{k\ge1}$ is a sequence of independent real-valued
Brownian motions  on  the probability space $(\OO, \F,
(\F_t)_{t\ge0}, \P)$ and $(e_k)_{k\ge1}$ is an orthonormal basis of
$\H$. Fix $T>0$ and   set $
\|f\|_{T,\infty}:=\sup_{t\in[0,T],x\in\H}\|f(t,x)\|$ for an
operator-valued map $f$ on $[0,T]\times\H$. Let $\B_b(\H;\H)$ be the
collection of all bounded measurable functions $f:\H\rightarrow\H.$

%
%

Consider the following semi-linear SPDE on $\mathbb{H}$
\beq\label{1.1} \d X_{t}= \{A X_{t}+b_t(X_{t})\}\d t+\d
W_t,~~~t\in[0,T],~~~X_0=x\in\H,
\end{equation}
where
\begin{enumerate}
\item[{\bf (A1)}] $(-A,\D(-A))$ is a positive definite self-adjoint operator on $\mathbb{H}$
having discrete spectrum with all eigenvalues
$(0<)\ll_1\le\ll_2\le\cdots$ counting multiplicities such that
\begin{equation}\label{eq1}
\sum_{i=1}^\8\ff{1}{\ll_i^{1-\aa}}<\8
\end{equation}
for some $\aa\in(0,1).$
\item[{\bf (A2)}]
$b: [0,T]\times \mathbb{H}\rightarrow\mathbb{H}$  is uniformly
bounded (i.e., $\|b\|_{T,\infty}<\8$) and there exist $c,\bb>0$ and
$\vv\in(0,1)$ with $\ff{2\bb}{2-\vv}\ge1-\aa$ such that
\beq\label{1.3} \|b_t(x)-b_t(y)\|_\H\leq
c\,\sum_{i=1}^{\8}\ff{1}{\ll_i^{\bb}}|x_i-y_i|^\vv,\ \ t\in[0,T],
~~x,y\in \mathbb{H},
 \end{equation}
where $x_i:=\<x,e_i\>$ and $y_i:=\<y,e_i\>$.

\item[{\bf (A3)}] For $c>0$ and $\vv\in(0,1)$ in \eqref{1.3},
\beq\label{1.4} \|b_{s}(x)-b_{t}(x)\|_\H\leq c|s-t|^\vv, \quad s,
t\in[0,T], x\in \mathbb{H}.
\end{equation}
\end{enumerate}

It is easy to see that \eqref{1.3} is equivalent to that $b_t$ is
continuous and that \beq\label{e3}
\|b_t(x)-b_t(x+\<y-x,e_i\>e_i)\|_\H\leq
\ff{c\,}{\ll_i^{\bb}}|x_i-y_i|^\vv,\ \ t\in[0,T], ~~x,y\in
\mathbb{H}.
 \end{equation}
By the H\"older inequality, along with \eqref{eq1} and
$\ff{2\bb}{2-\vv}\ge1-\aa$, \eqref{1.3} implies that \beq\label{e2}
\|b_t(x)-b_t(y)\|_\H\leq c_0\|x-y\|^\vv_\H,\ \ t\in[0,T], ~~x,y\in
\mathbb{H}
 \end{equation}
for some $c_0>0.$ That is, $b_t$ is H\"older continuous of order
$\vv$ w.r.t. the spatial variable. Thus, according to \cite[Theorem
1.1]{W}, \eqref{1.1} has a unique mild solution, i.e., there exists
a unique continuous adapted process $(X_t)_{t\ge0}$ such that
$\P$-a.s.,
\begin{equation*}
X_t=\e^{tA}x+\int_0^t\e^{(t-s)A}b_s(X_s)\d s+\int_0^t\e^{(t-s)A}\d
W_s,~~~~t>0.
\end{equation*}

  For any $n\in\mathbb{N}$, let $\pi_n:\H\mapsto
\H_n:=\mbox{span}\{e_1,\cdots,e_n\}$ be the orthogonal projection,
$A_n=\pi_nA$, $b^{(n)}_t=\pi_nb_t$ and $W_t^{(n)}=\pi_nW_t$. 
With the notation above, we consider the following
finite-dimensional approximation associated with \eqref{1.1} on
$\H_n\simeq\R^n$
\begin{equation}\label{f1}
\d X^{(n)}_t=\{A_nX^{n}_t+b^{(n)}_t(X^{(n)}_t)\}\d t+\d
W_t^{(n)},~~t>0,~~ X^{(n)}_0=x_n:=\pi_nx,
\end{equation}
which is the Galerkin projection of \eqref{1.1} onto $\H_n$. Since
$A_nx=Ax$ for any $x\in\H_n$ and $b_n$ is H\"older continuous in
terms of \eqref{e3}, by virtue of \cite[Theorem 1.1]{W}, \eqref{f1}
has a unique strong solution.

Now we define   numerical schemes to approximate $X^{(n)}_t$ in
time, which are called discrete-time EI scheme: for a stepsize
$\dd\in(0,1)$ and each integer $k\ge0$,
\begin{equation}\label{w4}
\bar{Y}^{(n),\dd}_{(k+1)\dd}=\e^{\dd
A_n}\{\bar{Y}^{(n),\dd}_{k\dd}+b^{(n)}_{k\dd}(\bar{Y}^{(n),\dd}_{k\dd})\dd+\bigtriangleup
W_k^{(n)}\}, ~~\bar{Y}^{(n),\dd}_0=x_n,
\end{equation}
which is also named as Lord-Rougemont scheme (see, e.g.,
\cite[(4.5)]{JK} and \cite[(3.2)]{GR}), where $\bigtriangleup
W_k^{(n)}:=W_{(k+1)\dd}^{(n)}-W_{k\dd}^{(n)}$, and continuous-time
EI scheme
\begin{equation}\label{eq6}
\begin{split}
Y^{(n),\dd}_t&=\e^{tA_n}x_n+\int_0^t\e^{(t-s_\dd)A_n}b_{s_\dd}^{(n)}(Y^{(n),\dd}_{s_\dd})\d
s+\int_0^t\e^{(t-s_\dd)A_n}\d W^{(n)}_s,~~~~t\ge0,
\end{split}
\end{equation}
 where $t_\dd:=\lf t/\dd\rf\dd$ with  $\lf t/\dd\rf$ being  the
integer part of $t/\dd$. It is easy to see that
$Y^{(n),\dd}_{k\dd}=\bar{Y}^{(n),\dd}_{k\dd}$ for any $k\ge0.$

The main result of this paper is stated as follows.
\beg{thm}\label{T1.1} {\rm Assume that  {\bf (A1)}-{\bf (A3)} hold,
and suppose $\ff{1}{2}>\nu:=\ff{\vv+2\bb\wedge\aa\vv^2}{2}+\aa-1>0$
and the initial value  $x\in\D(A)$. Then,  there exists some
$C=C(\nu,T)>0 $ such that
\begin{equation}\label{eq12}
\int_0^T\E\|X_{t}-Y^{(n),\dd}_t\|^2_\H\d t\le
C\Big\{\dd^{\nu}+\ff{1}{\ll_n^{\nu}}\Big\}.
\end{equation}


 }
\end{thm}
\paragraph{Remark 1.1}
If $\bb\ge\ff{\aa\vv^2}{2}$, then $\nu>0$ reduces to
$\vv+\aa\vv^2>2(1-\aa)$ which is  equivalent to
$\aa>\ff{2-\vv}{2+\vv^2}$.  On the other hand, let
  $Ax=\pp^2_\xi x$ for $x\in\mathcal
{D}(A):=H^2(0,\pi)\cap H^1_0(0,\pi)$. Then $A$ is a self-adjoint
negative operator and $Ae_n=-n^2e_n,\ n\in\mathbb{N}$, where
$e_n(\xi)=(2/\pi)^{1/2}\sin n\xi$ for $\xi\in[0,\pi]$ and $
n\in\mathbb{N}$. So, $\ll_n=n^2$ and we can take some $\aa\in(0,1)$
such that \eqref{eq1} holds provided that $\vv>0.732$. Thus, taking
$\dd=\ff{T}{n}$ we obtain the convergence rate (in the sense of
\eqref{eq12}) for EI scheme of stochastic heat equations driven by
additive space-time white noise, where the drift $b$ satisfies
\eqref{1.3}.

\begin{rem}
{\rm To avoid complicated computation, in the present setup we work
only on the case that the drift is uniformly bounded. Nevertheless,
employing the standard cut-off approach (see, e.g., \cite{BHY}), we
of course can extend our framework to the setting that the drift
coefficient is unbounded.

}
\end{rem}

The remainder of this paper is organized as follows: In Section
\ref{sec2}, we investigate the regularities of the Kolmogrov
equation which enables us to construct a transformation, which is a
$C^2$-diffeomorphism and provides regular presentation of $X_t$ and
$Y^{(n),\dd}_t$, respectively; In Section \ref{sec3}, we fucus on
the spatial error and the temporal error so as to complete the proof
of Theorem \ref{T1.1}.

Convention: The letter $c$ or $C$ with or without subscripts will
denote an unimportant constant, whose values may change in different
places. Moreover, we use the shorthand notation $a\1b$ to mean $a\le
c\,b$. If the constant $c$ depends on a parameter $p$, we shall also
write $c_p$ and $a\1_p b$.

\section{Regularity of Kolmogorov Equation}\label{sec2}

The proof of Theorem \ref{T1.1} relies heavily on the regularity of
the following parabolic type partial differential equation,  for
fixed $T>0$, any $\lambda>0$ and $x\in\cup_{n=1}^\8\H_n$,
\beg{equation}\label{2.1}
\Big(\partial_tu_t^\ll+\nabla_{b_t}u_t^\ll+b_{t}+\frac{1}{2}\sum_{i=1}^{\infty}\nn^2_{e_i}u_t^\ll+\nabla
_{A\cdot}u_t^\ll\Big)(x)=\ll u_t^\ll(x),~~~t\in[0,T],~~~u_T^\ll=0,
\end{equation}
 where $\nn^2_{e_i}:=\nabla_{e_{i}}\nabla_{e_{i}}$, the second order gradient operator along the direction $e_i$. To characterize
the regular property of the solution to \eqref{2.1}, we consider the
following  O-U process \beg{equation}\label{O-U} \d Z_t^{x}=A
Z_t^{x}\d t+\d W_t, ~~~t\geq 0,  \quad Z_0^{x}=x.
\end{equation}
Under {\bf (A1)}, it is well known that \eqref{O-U} has an up to
modifications unique mild solution $(Z_{t}^{x})_{t\geq 0}$ (see,
e.g., \cite{GZ}) with the associated Markov semigroup
$(P_{t}^{0})_{t\ge0}$. Consider the following integral equation
\begin{equation}\label{PDE} u^\ll_t(x)=\int_t^T
\e^{-\ll(s-t)}P_{s-t}^{0}(\nn_{b_s}u^\ll_s+b_s)(x)\d s, \quad
x\in\H,~~ t\in[0,T].
\end{equation}
It is well known that, for any $x\in\cup_{n=1}^\8\H_n$, \eqref{2.1}
and \eqref{PDE} are equivalent mutually.


Before we start the proof of Theorem \ref{T1.1}, we prepare some
auxiliary lemmas.

\beg{lem}\label{L2.1} {\rm  Let {\bf (A1)} and {\bf (A2)} hold.
Then,

\beg{enumerate}
\item[(1)]
 There exists $\ll_T>0$ such that, for any $\ll\ge\ll_T$, \eqref{PDE} has a unique solution
$u^\lambda\in C([0,T]; C_b^2(\mathbb{H}; \mathbb{H}))$, which
 satisfies
 \beq\label{u1-0} \lim_{\lambda\to
\infty}\{\|u^\lambda\|_{T,\infty}+\|\nabla u^\lambda
(-A)^{\kappa}\|_{T,\infty}+\|\nabla^{2} u^\lambda
\|_{T,\infty}\}=0,~~~~\kappa\in[0,1/2).
\end{equation}

\item[(2)] For any $\theta\in[0,\aa)$ and $\ll\ge\ll_T,$
 \beg{equation}\label{w1}
 \sum_{i=1}^\8\ll_i^{\theta}\|\nn_{e_i}u^\ll\|_{T,\8}^2<\8.
\end{equation}

\end{enumerate}

}
\end{lem}

\begin{proof}
For  the wellposedness of \eqref{PDE} and  $\lim_{\lambda\to
\infty}\{\|u^\lambda\|_{T,\infty}+\|\nabla^{2} u^\lambda
\|_{T,\infty}\}=0$ in \eqref{u1-0}, we refer to \cite[Lemma 2.3]{W}
for more details. Hereinafter, we aim to show that
\begin{equation}\label{eq8}
\lim_{\ll\to \8}\|\nn u^\ll (-A)^\kk
\|_{T,\8}=0,~~~~~\kappa\in[0,1/2).
\end{equation}
 Note that
the following Bismut formula \beg{equation}\label{2.6} \nn_\eta
P_t^0f(x)=\mathbb{E}\Big(\frac{f(Z_t^x)}{t}\int_0^t\< \nn_\eta
Z_s^x,\d W_s\>\Big),\ \ t>0, x,\eta\in\H, f\in\B_b(\H;\H)
\end{equation}
holds; see, e.g., \cite[(2.8)]{W} by using the Mallivin calculus. By
H\"older's inequality and It\^o's isometry, together with $\nn_\eta
Z_t^x=\e^{tA}\eta$, we deduce  that
\beg{equation}\label{2.7}\begin{split} \|\nn_{(-A)^\kk\eta}
P_t^0f(x)\|^2_\H &\leq\ff{\E\|f(Z_t^x)\|^2_\H}{t^2}\int_0^t
\|\e^{sA}(-A)^\kk\eta\|^2_\H\d s\\
&\1\ff{P_t^0\|f(x)\|_\H^2}{t^2}
\sum_{j=1}^\8\ff{(1-\e^{-2\ll_jt})\<\eta,e_j\>^2}{\ll_j^{1-2\kk}},~~~~~\kk\in[0,1/2),
\end{split}\end{equation}
which, combining $u^\lambda\in C([0,T]; C_b^1(\mathbb{H};
\mathbb{H}))$ with $\|b\|_{T,\infty}<\8$ and
$\ll_j^{2\kk-1}(1-\e^{-2\ll_jt})\1t^{1-2\kk}$, yields that
\beg{equation*}\begin{split}
\|\nn_{(-A)^\kk\eta}u^\ll_t\|_\H&\leq \int_t^T \e^{-\ll(s-t)}\|\nn_{(-A)^\kk\eta}P_{s-t}^0(\nn_{b_s}u^\ll_s+b_s)\|_\H\d s\\
&\1\|\eta\|_\H\int_{0}^{T} \e^{-\lambda s}s^{-(\kk+\frac{1}{2})}\d
s\\
&\1 \|\eta\|_\H\lambda^{\kk-\frac{1}{2}}.
\end{split}\end{equation*}
Thereby, \eqref{eq8} follows immediately. Next, in \eqref{2.7},
taking $\kk=0$ and $\eta=e_i$ gives that
 \beg{equation}\label{a3}
 \begin{split} \|\nn_{e_i}
P_t^0f(x)\|^2_\H
&\1\ff{(1-\e^{-2\ll_it})P_t^0\|f(x)\|_\H^2}{\ll_it^2}.
\end{split}\end{equation}
This, in addition to $u^\lambda\in C([0,T]; C_b^1(\mathbb{H};
\mathbb{H}))$ and $\|b\|_{T,\infty}<\8$, leads to
\beg{equation}\label{e1}\begin{split}
\|\nn_{e_i}u^\ll_t\|_\H&\leq \int_t^T \e^{-\ll(s-t)}\|\nn_{e_i}P_{s-t}^0(\nn_{b_s}u^\ll_s+b_s)\|_\H\d s\\
&\1\int_0^T \ff{\e^{-\ll
s}(1-\e^{-2\ll_is})^{\ff{1}{2}}}{\ll_i^{\ff{1}{2}}s}\d s\\
&\1\ff{1}{\ll_i^{\ff{1-\theta}{2}}}\int_0^T \e^{-\ll
s}s^{\ff{\theta}{2}-1}\d s\\
&\1\ff{1}{\ll_i^{\ff{1-\theta}{2}}},~~~~~\theta\in(0,1),
\end{split}\end{equation}
where we have used the elementary inequality
\begin{equation}\label{a5}
|\e^{-x}-\e^{-y}|\le
c_\theta|x-y|^\theta,~~~~x,y\ge0,~~\theta\in[0,1]
\end{equation}
for some constant $c_\theta>0.$ Hence, we deduce from \eqref{eq1}
and \eqref{e1} that  \beg{equation*}
 \sum_{i=1}^\8\ll_i^{\aa-\theta}\|\nn_{e_i}u^\ll\|_{T,\8}^2\le
 c_0\,\sum_{i=1}^\8\ff{1}{\ll_i^{1-\aa}}\le c,~~~~\theta\in(0,\aa]
\end{equation*}
for some constants $c_0,c>0.$ As a result, \eqref{w1} holds.
\end{proof}

\begin{rem}
{\rm In fact, all assertions in Lemma \ref{L2.1} hold under the
assumptions ({\bf A1}) and \eqref{e2}. Nevertheless, in Lemma
\ref{L2.1} we write ({\bf A2}) in lieu of \eqref{e2} just to present
in a consistent manner.

}
\end{rem}

The  Lemma   below plays a crucial role in analyzing spatial error
of numerical schemes.

\begin{lem}
{\rm Let  {\bf (A1)} and  ${\bf (A2)}$ hold and assume further
 $\nu:=\ff{\vv+2\bb\wedge\aa\vv^2}{2}+\aa-1>0$. Then, for any $\ll\ge\ll_T,$
\begin{equation}\label{w3}
\sum_{i=1}^\8\ll_i^{\nu}\|\nn_{e_i}\nn_{e_i} u^\ll\|_{T,\8}<\8.
\end{equation}

}
\end{lem}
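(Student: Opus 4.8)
The plan is to differentiate the integral equation \eqref{PDE} twice along $e_i$ and reduce the claim to a smoothing estimate for the Ornstein--Uhlenbeck semigroup, and then to sum over $i$ using \eqref{eq1}. Writing $g_s:=\nn_{b_s}u^\ll_s+b_s$, we have
\[
\nn_{e_i}\nn_{e_i}u_t^\ll=\int_t^T\e^{-\ll(s-t)}\nn_{e_i}\nn_{e_i}P_{s-t}^0 g_s\,\d s,
\]
so it suffices to bound $\|\nn_{e_i}\nn_{e_i}P_\tau^0 g_s\|_\H$ for $\tau=s-t$, then to carry out the $\d\tau$-integration and the summation. The key analytic input is the following: if $h\colon\H\to\H$ is bounded and $\vv$-H\"older along $e_i$ with coefficient $L_i:=\sup_{x,r\ne0}\|h(x+re_i)-h(x)\|_\H/|r|^\vv$, then
\[
\|\nn_{e_i}\nn_{e_i}P_\tau^0 h\|_{T,\8}\1 L_i\,\ff{1}{\tau^2}\Big(\ff{1-\e^{-\ll_i\tau}}{\ll_i}\Big)^{\ff{2+\vv}{2}}.
\]

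To prove this estimate I would use the semigroup property $P_\tau^0=P_{\tau/2}^0P_{\tau/2}^0$. For the outer derivative I apply the Bismut formula \eqref{2.6}, which turns $\nn_{e_i}$ into integration against the mean-zero Gaussian weight $\ff{2}{\tau}\int_0^{\tau/2}\<\e^{sA}e_i,\d W_s\>$, whose variance is $\1(1-\e^{-\ll_i\tau})/(\ll_i\tau^2)$ and supplies the $\ll_i$-decay. For the remaining derivative, acting on the now smooth $P_{\tau/2}^0h$, I use that the first variation $\nn_{e_i}Z^x_{\tau/2}=\e^{-\ll_i\tau/2}e_i$ is deterministic, so only $\nn_{e_i}P_{\tau/2}^0 h$ enters; this factor I estimate by the directional H\"older continuity of $h$, rewriting $\nn_{e_i}P_{\tau/2}^0h$ via the mean-zero Bismut weight and re-centring the $i$-th Ornstein--Uhlenbeck coordinate, so that only the increment $h(\cdot+re_i)-h(\cdot)$ appears and Gaussian moment bounds give the power $(1+\vv)/2$. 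Once the estimate holds, bounding $\e^{-\ll\tau}\le1$ and splitting $\int_0^T\d\tau$ at $\tau=1/\ll_i$ (to absorb the $\tau^{-2}$ singularity against $(1-\e^{-\ll_i\tau})^{(2+\vv)/2}\sim(\ll_i\tau)^{(2+\vv)/2}$ near $0$) yields $\|\nn_{e_i}\nn_{e_i}u^\ll\|_{T,\8}\1 L_i\,\ll_i^{-\vv/2}$.

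It then remains to bound the directional H\"older coefficient $L_i$ of $g_s$. For the summand $b_s$, assumption \eqref{e3} gives $L_i\1\ll_i^{-\bb}$ directly, producing the exponent $\tfrac12\vv+\bb$; this is the $2\bb$ branch. For $\nn_{b_s}u^\ll_s=\sum_j(b_s)_j\,\nn_{e_j}u^\ll_s$ I split its $e_i$-increment into the part where $b_s$ varies and the part where $\nn_{e_j}u^\ll_s$ varies. The first part is again controlled by \eqref{e3} together with $\sum_j\|\nn_{e_j}u^\ll\|^2_{T,\8}<\8$ from \eqref{w1} (Cauchy--Schwarz), giving $\1\ll_i^{-\bb}$. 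The main obstacle is the second part, i.e. controlling the $e_i$-increments of $\nn_{e_j}u^\ll$, which amounts to estimating the mixed second derivatives $\nn_{e_i}\nn_{e_j}u^\ll$ with decay in both indices. I would obtain such a two-index bound by running the splitting argument once more — now using the structured Bismut weight in the $e_j$-direction (contributing $\ll_j$-decay) together with the directional-H\"older trick in the $e_i$-direction — and then interpolate the $e_i$-increment of $\nn_{e_j}u^\ll$ between its uniform bound from Lemma \ref{L2.1}(1) and this mixed bound.

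Summing over $j$ by Cauchy--Schwarz, using $\sum_j(b_s)_j^2=\|b_s\|_\H^2<\8$ and the weighted summability $\sum_j\ll_j^{\theta}\|\nn_{e_j}u^\ll\|^2_{T,\8}<\8$ for $\theta<\aa$ from \eqref{w1}, and optimising the interpolation exponent against the constraint \eqref{eq1}, produces the decay $\ll_i^{-\aa\vv^2/2}$ for this part — this is exactly where $\aa\vv^2$ (and the role of $\aa$, through \eqref{eq1} and \eqref{w1}) enters. Combining the two branches gives $L_i\1\ll_i^{-(\bb\wedge(\aa\vv^2/2))}$, whence $\|\nn_{e_i}\nn_{e_i}u^\ll\|_{T,\8}\1\ll_i^{-\vv/2-(\bb\wedge(\aa\vv^2/2))}$ and
\[
\sum_{i=1}^\8\ll_i^{\nu}\|\nn_{e_i}\nn_{e_i}u^\ll\|_{T,\8}\1\sum_{i=1}^\8\ll_i^{\nu-\vv/2-(\bb\wedge(\aa\vv^2/2))}=\sum_{i=1}^\8\ll_i^{\aa-1}<\8,
\]
by the definition of $\nu$ and \eqref{eq1}. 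The delicate point throughout is keeping the $\tau$-singularities integrable while extracting enough $\ll_i$- and $\ll_j$-decay; in particular, the mixed-derivative estimate is mildly self-referential and I expect to close it by taking $\ll_T$ large, exactly as in the proof of Lemma \ref{L2.1}.
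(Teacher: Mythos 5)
Your overall skeleton coincides with the paper's: differentiate \eqref{PDE} twice along $e_i$, prove a second-order smoothing estimate for $P_t^0$ in which the $e_i$-directional H\"older coefficient $L_i$ of $\hat f_t^\ll=\nn_{b_t}u_t^\ll+b_t$ enters linearly, integrate in time to get $\|\nn_{e_i}\nn_{e_i}u^\ll\|_{T,\8}\1 L_i\,\ll_i^{-\vv/2}$, and sum via \eqref{eq1}. Your key smoothing estimate is, up to the harmless extra factor $\e^{-\ll_i\tau/2}$ which the paper retains, exactly \eqref{a1}, and you obtain it from the same two ingredients: the iterated Bismut formula \eqref{nabla^2} and the re-centring of the $i$-th O--U coordinate as in \eqref{a4}. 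The branch $L_i\1\ll_i^{-\bb}$ coming from $b$ itself via \eqref{e3} also matches, as does the final bookkeeping $\nu-\vv/2-(\bb\wedge\ff{\aa\vv^2}{2})=\aa-1$.

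The gap is in the $\ff{\aa\vv^2}{2}$ branch, i.e.\ the control of the $e_i$-increment of $\nn_{b_s}u_s^\ll$. You propose to route this through mixed second derivatives $\nn_{e_i}\nn_{e_j}u^\ll$ with decay in \emph{both} indices, summed over $j$ by Cauchy--Schwarz against \eqref{w1}; but to extract the $e_i$-decay within your own scheme you must apply the directional re-centring trick to $\hat f^\ll$ in the $e_i$-direction, which presupposes precisely the directional H\"older coefficient you are trying to bound. You acknowledge this circularity and hope to close it by enlarging $\ll$, but you never verify that the exponents actually close, and the claimed output $\ll_i^{-\aa\vv^2/2}$ is asserted rather than derived; as written this step does not stand. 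It is also unnecessary: since $\|b\|_{T,\8}<\8$, no decay in the second index is needed. It suffices to bound $\nn_{e_i}\nn_\eta u^\ll$ uniformly over $\|\eta\|_\H\le1$ with decay $\ll_i^{-\aa\vv/2}$ in $i$ alone, and this is obtained non-circularly in \eqref{e8}--\eqref{e9}: re-centre $f$ at the deterministic point $\e^{tA}x$ (not coordinate-wise), use only the \emph{global} $\vv$-H\"older continuity of $\hat f^\ll$ (available from \eqref{e2} and \eqref{u1-0}) together with $\E\|Z_t^x-\e^{tA}x\|_\H^2\1 t^{\aa}$ from \eqref{e7} --- this is where $\aa$ and \eqref{eq1} actually enter --- and then integrate $\e^{-\ll_it/2}t^{-1+\aa\vv/2}$ in time. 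Taking $\eta=b_s(\cdot)$ and interpolating the resulting Lipschitz-in-$x_i$ bound against the uniform bound \eqref{u1-0} yields \eqref{a2} with exponent $\aa\vv\cdot\vv/2=\aa\vv^2/2$. Replacing your mixed-derivative step by this one-index estimate repairs the argument and reduces it to the paper's proof.
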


\begin{proof}
From \eqref{2.6} and $\nn_\eta Z_t^x=\e^{tA}\eta$, we have
\begin{equation}\label{nabla^2}
\ff{1}{2}(\nn_{e_i}\nn_\eta
P_t^0f)(x)=\ff{\e^{-\ff{\ll_it}{2}}}{t}\E\Big((\nn_{e_i}P_{t/2}^0f)(Z^x_{t/2})\int_0^{t/2}\<\e^{sA}\eta,\d
W_s\>\Big).
\end{equation}
By H\"older's inequality and It\^o's isometry, it then follows from
\eqref{a3}, \eqref{a5} with $\theta=1$, contractive property of
$\e^{tA}$ and semigroup property of $P_t^0$ that
\begin{equation}\label{e6}
\begin{split}
 \|(\nn_{e_i}\nn_\eta
P_t^0f)(x)\|_\H^2&\1\ff{\e^{-\ll_it}\|\eta\|_\H^2}{t}\E\|(\nn_{e_i}P_{t/2}^0f)(Z^x_{t/2})\|_\H^2\\
&\1\ff{\e^{-\ll_it}\|\eta\|_\H^2 }{t^2}P_t^0\|f(x)\|_\H^2.
\end{split}
\end{equation}
Furthermore,  It\^o's isometry, \eqref{eq1} as well as \eqref{a5}
with $\theta=\aa$ yield that
\begin{equation}\label{e7}
\E\|Z_t^x-\e^{tA}x\|_\H^2=\int_0^t\|\e^{(t-s)A}\|_{\rm HS}^2\d
s=\sum_{i=1}^\8\ff{1-\e^{-2\ll_it}}{2\ll_i}\1t^\aa.
\end{equation}
For a  mapping $f:\H\rightarrow\H$ such that $\|f(x)-f(y)\|_\H\1
\|x-y\|^\vv_\H$, let $\tt f_t(y)=f(y)-f(\e^{tA}x)$. Taking
\eqref{e6} and \eqref{e7} into account and employing Jensen's
inequality, we derive that
\begin{equation}\label{e8}
\begin{split}
\|(\nn_{e_i}\nn_\eta P_t^0f)(x)\|_\H^2&=\|(\nn_{e_i}\nn_\eta
P_t^0\tt f)(x)\|_\H^2 \\
&\1\ff{\e^{-\ll_it}\|\eta\|_\H^2 \E\| Z^x_t-\e^{tA}x\|^{2\vv}_\H}{t^2}\\
&\1\ff{\e^{-\ll_it}\|\eta\|_\H^2 }{t^{2-\aa\vv}}.
\end{split}
\end{equation}
For notational simplicity, set \begin{equation}\label{w0} \hat
f_t^\ll(x):=(\nn_{b_t}u^\ll_t+b_t)(x).
\end{equation}
It is easy to see from \eqref{e2} and \eqref{u1-0} that
\begin{equation*}
\|\hat f_t^\ll(x)-\hat f_t^\ll(y)\|_\H\1\|x-y\|^\vv_\H,~~~~x,y\in\H.
\end{equation*}
Thus, combining \eqref{PDE} with \eqref{e8}   yields that
\beq\label{e9}\begin{split} \|\nn_{e_i}\nn_\eta
u^\ll_s\|_\H&\le\int_s^T
\e^{-\ll(t-s)}\|\nn_{e_i}\nn_\eta P_{t-s}^{0}\hat f_t^\ll\|_\H\d t\\
&\1\|\eta\|_\H\int_0^T \ff{\e^{-\ll_it/2} }{t^{1-\aa\vv/2}}\d t\\
&\1\ff{\|\eta\|_\H}{\ll_i^{\aa\vv/2}}.
\end{split}
\end{equation}
For a mapping $f:\H\rightarrow\H$ satisfying
\begin{equation}\label{eq0}
\|f(x)-f(y)\|_\H\le
\sum_{i=1}^\8\ll_i^{-\bb_0}|x_i-y_i|^{\vv_0},~~~x,y\in\H
\end{equation}
for some $\bb_0>0$ and $\vv_0\in(0,1)$, let $$\tt
f_t^{i}(y):=f(y)-f(y+\<\e^{tA}x-y,e_i\>e_i),~~~x,y\in\H.$$ Now,
\eqref{eq0}, Jensen's inequality and It\^o's isometry imply that
\begin{equation}\label{a4}
P_t^0\|\tt
f_t^i(x)\|_\H^2\le\ll_i^{-2\bb_0}\E|\Lambda_t^i|^{2\vv_0}\1\ff{(1-\e^{-2\ll_it})^{\vv_0}}{\ll_i^{\vv_0+2\bb_0}},
\end{equation}
where
\begin{equation*}
\Lambda_t^i:=\int_0^t\<\e^{(t-s)A}\d W_s,e_i\>
=\int_0^t\e^{-\ll_i(t-s)}\d\bb_s^{(i)}.
\end{equation*}
In terms of \eqref{e6} with $\eta=e_i$ and \eqref{a4}, besides the
notion of $\tt f_t^i$, it follows from \eqref{a5} that
\begin{equation}\label{a1}
\begin{split}
 \|(\nn_{e_i}\nn_{e_i}
P_t^0f)(x)\|_\H^2&= \|(\nn_{e_i}\nn_{e_i} P_t^0\tt
f_t^i)(x)\|_\H^2\\
&\1\ff{\e^{-\ll_it} P_t^0\|\tt f_t^i(x)\|^2_\H}{t^2}\\
&\1\ff{\e^{-\ll_it}
}{t^{2-\vv_0\theta}\ll_i^{\vv_0(1-\theta)+2\bb_0}},~~~~\theta\in(0,1].
\end{split}
\end{equation}
Next, thanks to \eqref{e3},  $\|b\|_{T,\8}<\8$ and  \eqref{e9}, we
obtain
\begin{equation}\label{a2}
\|\hat f_t^\ll(x)-\hat
f_t^\ll(x+\<y-x,e_i\>e_i)\|_\H\1\ll_i^{-(\bb\wedge\ff{\aa\vv^2}{2})}|x_i-y_i|^\vv,
\end{equation}
 where
$\hat f_t^\ll:\H\rightarrow\H$ is defined in \eqref{w1},
Hence, in light of \eqref{a1} with $\vv_0=\vv$ and
$\bb_0=\bb\wedge\ff{\aa\vv^2}{2}$ and \eqref{a2}, one infers that
\begin{equation*}
\begin{split}
\|\nn_{e_i}\nn_{e_i}
u_s^\ll\|_\H&\1\ff{1}{\ll_i^{\ff{\vv(1-\theta)+2\bb\wedge\aa\vv^2}{2}}}\int_0^T\ff{\e^{-\ff{\ll_it}{2}}
}{t^{1-\ff{\vv\theta}{2}}}\d t\\
&\1\ff{1}{\ll_i^{\ff{\vv+2\bb\wedge\aa\vv^2}{2}}}.
\end{split}
\end{equation*}
This, along with \eqref{eq1}, implies \eqref{w3}.
\end{proof}


The following lemma   provides us with a regular representation of
the continuous-time EI scheme \eqref{eq6}. \beg{lem}\label{L2.2}
{\rm For any $t\in[0,T]$ and $\ll\geq\ll_T$, it holds that
\beg{equation}\begin{split}\label{2.11}
&Y^{(n),\dd}_t+u_t^\ll(Y^{(n),\dd}_t)\\
&=\e^{tA}\{x_n+u_0^\ll(x_n)\}+\int_{0}^{t}\e^{(t-s)A}(\lambda {\rm {\bf I}}-A) u^\lambda_{s}(Y^{(n),\dd}_s)\d s\\
&\quad+\int_{0}^{t}\e ^{(t-s)A}\{\e^{(s-s_\dd)A}b_{s_\dd}^{(n)}(Y^{(n),\dd}_{s_\dd})-b_s(Y^{(n),\dd}_s)\}\d s\\
&\quad+\int_{0}^{t}\e ^{(t-s)A}\nn u_s^\ll(Y^{(n),\dd}_s)\left(\e^{(s-s_\dd)A}b_{s_\dd}^{(n)}(Y^{(n),\dd}_{s_\dd})-b_s(Y^{(n),\dd}_s)\right)\d s\\
&\quad+\frac{1}{2}\sum_{i=1}^n\int_{0}^{t}\e ^{(t-s)A}(\nn^2_{\e^{(s-s_\dd)A}e_i}u^\ll_s -\nn^2_{e_{i}}u^\lambda_s)(Y^{(n),\dd}_s)\d s\\
&\quad+\frac{1}{2}\sum_{i=n+1}^\infty\int_{0}^{t}\e ^{(t-s)A}(\nabla^{2}_{e_{i}}u^\lambda_{s})(Y^{(n),\dd}_s)\d s\\
&\quad+\int_{0}^{t}\e ^{(t-s_\dd)A}\d W^{(n)}_s+\int_{0}^{t}\e
^{(t-s)A}(\nn_{\e^{(s-s_\dd)A}\d W^{(n)}_s} u^\ll_s)(Y^{(n),\dd}_s),
\end{split}\end{equation}
in which ${\bf I}$ is the identity operator on $\H.$ }
\end{lem}
\begin{proof}[Proof]
Since $A_nx=Ax$ for any $x\in\H_n$, \eqref{eq6} can be reformulated
as \beg{equation*}  \d
Y^{(n),\dd}_t=\{AY^{(n),\dd}_t+\e^{(t-t_\dd)A}b_{t_\dd}^{(n)}(Y^{(n),\dd}_{t_\dd})\}\d
t+\e^{(t-t_\dd)A}\d W^{(n)}_t,~~t>0,~~Y^{(n),\dd}_0=x_n.
\end{equation*}
Applying It\^{o}'s formula, for $\ll\ge\ll_T$  we deduce from
\eqref{2.1} that

\beg{align*}  \d
&\{Y^{(n),\dd}_t+u_t^\ll(Y^{(n),\dd}_t)\}\\
&=\Big\{AY^{(n),\dd}_t
+\e^{(t-t_\dd)A}b_{t_\dd}^{(n)}(Y^{(n),\dd}_{t_\dd})+(\partial_tu_t^\ll)(Y^{(n),\dd}_t)+(\nn_{A\cdot}
u_t^\ll)(Y^{(n),\dd}_t)\\
&\quad+\Big(\nn_{\e^{(t-t_\dd)A}b_{t_\dd}^{(n)}(Y^{(n),\dd}_{t_\dd})}
u_t^\ll\Big)(Y^{(n),\dd}_t)+\ff{1}{2}\sum_{k=1}^n\Big(\nn^2_{\e^{(t-t_\dd)A}e_k}u_t^\ll\Big)(Y^{(n),\dd}_t)\Big\}\d t\\
&\quad+\e^{(t-t_\dd)A}\d W^{(n)}_t+\Big(\nn_{\e^{(t-t_\dd)A}\d
W^{(n)}_t}
u_t^\ll\Big)(Y^{(n),\dd}_t)\\
&=\Big\{AY^{(n),\dd}_t
+\ll u_t^\ll(Y^{(n),\dd}_t)+\e^{(t-t_\dd)A}b_{t_\dd}^{(n)}(Y^{(n),\dd}_{t_\dd})-b_t(Y^{(n),\dd}_t)\\
&\quad+\nn
u_t^\ll(Y^{(n),\dd}_t)\left(\e^{(t-t_\dd)A}b_{t_\dd}^{(n)}(Y^{(n),\dd}_{t_\dd})-b_t(Y^{(n),\dd}_t)\right)+\ff{1}{2}\sum_{k=1}^n\Big(\nn^2_{\e^{(t-t_\dd)A}e_k}u_t^\ll\Big)(Y^{(n),\dd}_t)\\
&\quad-\ff{1}{2}\sum_{k=1}^\infty(\nn^2_{e_k}u_t^\ll)(Y^{(n),\dd}_t)\Big\}\d
t+\e^{(t-t_\dd)A}\d W^{(n)}_t+\Big(\nn_{\e^{(t-t_\dd)A}\d W^{(n)}_t}
u_t^\ll\Big)(Y^{(n),\dd}_t).
\end{align*}
This, in addition to integration by parts, further implies that
\begin{align*}
&\int_0^tA\e^{(t-s)A}\{Y^{(n),\dd}_s+u_s^\ll(Y^{(n),\dd}_s)\}\d s\\
&=-\e^{(t-s)A}\{Y^{(n),\dd}_s+u_s^\ll(Y^{(n),\dd}_s)\}\Big|_0^t+\int_0^t\e^{(t-s)A}\d\{Y^{(n),\dd}_s+u_s^\ll(Y^{(n),\dd}_s)\}\\
&=-\{Y^{(n),\dd}_t+u_t^\ll(Y^{(n),\dd}_t)\}+\e^{tA}\{x_n+u_0^\ll(x_n)\}+\int_0^tA\e^{(t-r)A}Y^{(n),\dd}_r
\d
r\\
&\quad+\ll\int_0^t\e^{(t-r)A} u_r^\ll(Y^{(n),\dd}_r)\d
r+\int_0^t\e^{(t-r)A}\{\e^{(r-r_\dd)A}b_{r_\dd}^{(n)}(Y^{(n),\dd}_{r_\dd})-b_r(Y^{(n),\dd}_r)\}\d
r\\
&\quad+\int_0^t\e^{(t-r)A}\nn u_r^\ll(Y^{(n),\dd}_r)\left(\e^{(r-r_\dd)A}b_{r_\dd}^{(n)}(Y^{(n),\dd}_{r_\dd})-b_r(Y^{(n),\dd}_r)\right)\d r\\
&\quad+\ff{1}{2}\sum_{k=1}^n\int_0^t\e^{(t-r)A}\Big(\nn^2_{\e^{(r-r_\dd)A}e_k}u_r^\ll\Big)(Y^{(n),\dd}_r)\d
r-\ff{1}{2}\sum_{k=1}^\infty\int_0^t\e^{(t-r)A}(\nn^2_{e_k}u_r^\ll)(Y^{(n),\dd}_r)\d
r\\
&\quad+\int_0^t\e^{(t-r_\dd)A}\d
W^{(n)}_r+\int_0^t\e^{(t-r)A}\Big(\nn_{\e^{(r-r_\dd)A}\d W^{(n)}_r}
u_r^\ll\Big)(Y^{(n),\dd}_r).
\end{align*}
As a consequence, the desired assertion \eqref{2.11} is now
available.
\end{proof}


The following lemma   concerns the continuity in the mean $L^2$-norm
sense for the displacement of $(Y^{(n),\dd}_t)_{t\in[0,T]}$.

\beg{lem}\label{L2.4}{\rm  Let {\bf (A1)} and  {\bf (A2)} hold and
assume  that  the inial value $x\in\D(A)$. Then,
 \beg{equation}\label{q1}
\sup_{t\in[0,T]}\mathbb{E}\|Y^{(n),\dd}_t-Y^{(n),\dd}_{t_\dd}\|_\H^2\1_T\dd^\aa.
\end{equation}
}
\end{lem}
\beg{proof} To make the content self-contained, we here give a
sketch although the corresponding argument of \eqref{q1} is quite
standard. By virtue of \eqref{eq6}, it follows immediately that
\begin{align*}
Y^{(n),\dd}_t-Y^{(n),\dd}_{t_\dd}
&=(\e^{(t-t_\dd)A}-{\rm {\bf I}})\e^{{t_\dd}A}x_n\\
&\quad+\int_0^{t_\dd}(\e^{(t-t_\dd)A}-{\rm {\bf
I}})\e^{(t_\dd-s_\dd)A}b_{s_\dd}^{(n)}(Y^{(n),\dd}_{s_\dd})\d
s\\
&\quad+\int_0^{t_\dd}(\e^{(t-t_\dd)A}-{\rm {\bf I}})\e^{(t_\dd-s_\dd)A}\d W^{(n)}_s\\
&\quad+\int_{t_\dd}^t\e^{(t-s_\dd)A}b_{s_\dd}^{(n)}(Y^{(n),\dd}_{s_\dd})\d
s+\int_{t_\dd}^t\e^{(t-s_\dd)A}\d W^{(n)}_s.
\end{align*}
Recall the elementary inequalities: for any $\gamma\in(0,1],$
\begin{equation}\label{eq5}
\|(-A)^\gamma\e^{tA}\|\le
t^{-\gamma}~~~~~\mbox{and}~~~~~~\|(-A)^{-\gamma}(\e^{tA}-{\rm {\bf
I}})\|\le t^\gamma.
\end{equation}
Next, according to  H\"older's inequality and  It\^o's isometry and
by taking contractive property of $\e^{tA}$ and $\|b\|_{T,\8}<\8$
into account, we derive from \eqref{eq5} that
\begin{align*}
\E\|Y^{(n),\dd}_t-Y^{(n),\dd}_{t_\dd}\|_\H^2&\1_T\|(\e^{(t-t_\dd)A}-{\rm {\bf I}})\e^{{t_\dd}A}x_n\|_\H^2\\
&\quad+\int_0^{t_\dd}\E\|(\e^{(t-t_\dd)A}-{\rm {\bf
I}})\e^{(t_\dd-s_\dd)A}b_{s_\dd}^{(n)}(Y^{(n),\dd}_{s_\dd})\|_\H^2\d
s\\
&\quad+\int_0^{t_\dd}\|(\e^{(t-t_\dd)A}-{\rm {\bf I}})\e^{(t_\dd-s_\dd)A}\|_{\mathscr{L}_2}^2\d  s\\
&\quad+\dd\int_{t_\dd}^t\E\|\e^{(t-s_\dd)A}b_{s_\dd}^{(n)}(Y^{(n),\dd}_{s_\dd})\|_\H^2\d
s+\int_{t_\dd}^t\|\e^{(t-s_\dd)A}\|_{\mathscr{L}_2}^2\d s\\
&\1_T\|(\e^{(t-t_\dd)A}-{\rm {\bf I}})(-A)^{-1}\|^2\|A x\|_\H^2\\
&\quad+\int_0^{t_\dd}\|(\e^{(t-t_\dd)A}-{\rm {\bf
I}})(-A)^{-\aa/2}\|^2\|(-A)^{\aa/2}\e^{(t_\dd-s_\dd)A}\|^2\E\|b_{s_\dd}^{(n)}(Y^{(n),\dd}_{s_\dd})\|_\H^2\d
s\\
&\quad+\int_0^{t_\dd}\|(\e^{(t-t_\dd)A}-{\rm {\bf I}})(-A)^{-\aa/2}\|^2\|(-A)^{\aa/2}\e^{(t_\dd-s_\dd)A}\|_{\mathscr{L}_2}^2\d  s\\
&\quad+\dd\int_{t_\dd}^t\E\|b_{s_\dd}^{(n)}(Y^{(n),\dd}_{s_\dd})\|_\H^2\d
s+\int_{t_\dd}^t\|\e^{(t-s_\dd)A}\|_{\mathscr{L}_2}^2\d s\\
&\1_T\dd^2+\dd^{\aa}\int_0^{t_\dd}(t_\dd-s_\dd)^{-\aa}\d
s+\dd^{\aa}\sum_{i=1}^\8\ll_i^{\aa}\int_0^{t_\dd}\e^{-2\ll_i(t_\dd-s_\dd)}\d
s\\
&\quad+\sum_{i=1}^\8\int_0^{t-t_\dd}\e^{-2\ll_is}\d
s\\
&\1_T\dd^2+\dd^{\aa}\int_0^{t_\dd}s^{-\aa}\d
s+\dd^{\aa}\sum_{i=1}^\8\ll_i^{\aa}\int_0^{t_\dd}\e^{-2\ll_is}\d
s+\sum_{i=1}^\8\int_0^{t-t_\dd}\e^{-2\ll_is}\d
s\\
&\1_T\dd^{\aa}+\dd^{\aa}\sum_{i=1}^\8\ff{1}{\ll_i^{1-\aa}}+\sum_{i=1}^\8\ff{1-\e^{-2\ll_i(t-t_\dd)}}{\ll_i}\\
&\1_T\dd^{\aa}+\dd^{\aa}\sum_{i=1}^\8\ff{1}{\ll_i^{1-\aa}}\\
&\1_T\dd^{\aa},
\end{align*}
where in the penultimate display we have used \eqref{a5} with
$\theta=\aa$ and in the last step utilized \eqref{eq1}.
\end{proof}

\section{Proof of Theorem \ref{T1.1}}\label{sec3}
With Lemmas \ref{L2.1}-\ref{L2.4} in hand, we now in a position to
complete the proof of Theorem \ref{T1.1}. In view  of \eqref{u1-0},
we deduce that there exists $\hat\ll_T\ge\ll_T$ such that  for any
$\ll\ge\hat\ll_T$
\begin{equation}\label{eq7}
\|u^\ll\|_{T,\8}+\|\nn u^\ll\|_{T,\infty}+\|\nn u^\ll
(-A)^\kk\|_{T,\infty}+\|\nabla^{2} u^\lambda
\|_{T,\infty}\le\ff{1}{3\ss2}.
\end{equation}
In what follows, we shall fix $\ll\ge\hat\ll_T$ so that \eqref{eq7}
holds. For notational simplicity, set
$\theta_t^\ll(x):=x+u_t^\ll(x), x\in\H.$ Since
\begin{equation*}
\begin{split}
\|X_t-Y^{(n),\dd}_t\|_\H^2&\le2\|\theta_t^\ll(X_t)-\theta_t^\ll(Y^{(n),\dd}_t)\|_\H^2+2\|u_t^\ll(X_t)-u_t^\ll(Y^{(n),\dd}_t)\|_\H^2\\
&\le2\|\theta_t^\ll(X_t)-\theta_t^\ll(Y^{(n),\dd}_t)\|_\H^2+\ff{1}{9}\|X_t-Y^{(n),\dd}_t\|_\H^2,
\end{split}
\end{equation*}
we have
\begin{equation}\label{eq4}
\GG_t^{(n),\dd}:=\E\|X_t-Y^{(n),\dd}_t\|_\H^2\le\ff{9}{4}\E\|\theta_t^\ll(X_t)-\theta_t^\ll(Y^{(n),\dd}_t)\|_\H^2.
\end{equation}
According to \cite[Proposition 2.5]{W}, one has
\begin{equation}\label{eq3}
\begin{split}
\theta_t^\ll(X_t)&=\e^{tA}\theta_0^\ll(x)+\int_0^t\{(\ll{\bf
I}-A)\e^{(t-s)A}u_s^\ll(X_s)\}\d s\\
&\quad+\int_0^t\e^{(t-s)A}\{\d W_s+(\nn_{\d W_s} u^\ll_s)(X_s)\}.
\end{split}
\end{equation}
In view of \eqref{2.11}  and \eqref{eq3}, we find that
\begin{equation} \label{3.3}
\begin{split}
\GG_t^{(n),\dd}
&\le 9\Big\{\|\e^{tA}(x-x_n)\|^2_\H+\|\e^{tA}(u_0^\ll(x)-u_0^\ll(x_n))\|^2_\H\\
&\quad+\E\Big\|\int_{0}^{t}\e^{(t-s)A}(\ll{\bf
I}-A)\{u^\lambda_{s}(X_{s})-u^\ll_{s}(Y^{(n),\dd}_s)\}\d s\Big\|^2_\H\\
&\quad+\E\Big\|\int_0^t\e ^{(t-s)A}\{\e^{(s-s_\dd)A}b_{s_\dd}^{(n)}(Y^{(n),\dd}_{s_\dd})-b_{s}(Y^{(n),\dd}_s)\}\d s\Big\|^2_\H\\
&\quad+\E\Big\|\int_{0}^{t}\e ^{(t-s)A}\nn u_s^\ll(Y^{(n),\dd}_s)\left(\e^{(s-s_\dd)A}b_{s_\dd}^{(n)}(Y^{(n),\dd}_{s_\dd})-b_s(Y^{(n),\dd}_s)\right)\d s\Big\|^2_\H\\
&\quad+\E\Big\|\sum_{i=1}^n\int_0^t\e ^{(t-s)A} \Big(\nabla^{2}_{\e^{(s-s_\dd)A}e_i}u^\lambda_{s} -\nabla^{2}_{e_{i}}u^\lambda_{s}\Big)(Y^{(n),\dd}_s)\d s\Big\|^2_\H\\
&\quad+\E\Big\|\sum_{i=n+1}^\8\int_0^t\e
^{(t-s)A}\Big(\nn^2_{e_{i}}u^\ll_s\Big)(Y^{(n),\dd}_s)\d
s\Big\|^2_\H\\
&\quad+\E\Big\|\int_{0}^{t}\e^{(t-s)A}\d W_s-\int_{0}^{t}\e ^{(t-s_\dd)A}\d W^{(n)}_s\Big\|^2_\H\\
&\quad+\E\Big\|\int_{0}^{t}\e^{(t-s)A}\Big(\nn_{\d W_s} u^\ll_s\Big)(X_{s})-\int_{0}^{t}\e ^{(t-s)A}\Big(\nn_{\e^{(s-s_\dd)A}\d W^{(n)}_s} u^\ll_s\Big)(Y^{(n),\dd}_s)\Big\|^2_\H\Big\}\\
&=:9(\Lambda_t^{(1)}+\Lambda_t^{(2)}+\cdots+\Lambda_t^{(9)}).
\end{split}
 \end{equation}
Using H\"older's inequality and Fubini's theorem, we deduce from
\eqref{eq7} that
\begin{align*}
\int_0^t\e^{-2\ll s}\Lambda_s^{(3)}\d s
&=\sum_{i=1}^\8\E\int_0^t\e^{-2\ll
s}\Big(\int_0^s\<\e^{(s-r)A}(\ll{\bf
I}-A)\{u^\ll_r(X_r)-u^\ll_r(Y^{(n),\dd}_r)\},e_i\>\d r\Big)^2 \d s\\
&=\sum_{i=1}^\8\E\int_0^t\Big(\int_0^s\e^{-\ll_i(s-r)-\ll
s}(\ll+\ll_i)\<u^\ll_r(X_r)-u^\ll_r(Y^{(n),\dd}_r),e_i\>\d r\Big)^2
\d s\\
&\le\sum_{i=1}^\8\int_0^t\Big(\int_0^s\e^{-(\ll_i+\ll)
(s-r)}(\ll+\ll_i)\d r
\\
&\quad\times\int_0^s\e^{-(\ll+\ll_i)(s-r)-2\ll
r}(\ll+\ll_i)\E\<u^\ll_r(X_r)-u^\ll_r(Y^{(n),\dd}_r),e_i\>^2\d
r\Big) \d
s\\
&\le\sum_{i=1}^\8\int_0^t\int_0^s\e^{-(\ll+\ll_i)(s-r)-2\ll
r}(\ll+\ll_i)\E\<u^\ll_r(X_r)-u^\ll_r(Y^{(n),\dd}_r),e_i\>^2\d r \d
s\\
&=\sum_{i=1}^\8\int_0^t\Big(\int_r^t\e^{-(\ll+\ll_i)(s-r)}(\ll+\ll_i)\d
s\Big)\e^{-2\ll r}\E\<u^\ll_r(X_r)-u^\ll_r(Y^{(n),\dd}_r),e_i\>^2 \d
r\\
&\le\sum_{i=1}^\8\int_0^t\e^{-2\ll
s}\E\<u^\ll_s(X_s)-u^\ll_s(Y^{(n),\dd}_s),e_i\>^2 \d s\\
&=\int_0^t\e^{-2\ll
s}\E\|u^\ll_s(X_s)-u^\ll_s(Y^{(n),\dd}_s)\|_\H^2 \d s\\
&\le\ff{1}{18}\int_0^t\e^{-2\ll s}\GG_s^{(n),\dd} \d s,
\end{align*}
which, combining with \eqref{3.3}, further leads to
\begin{equation*}
\int_0^t\e^{-2\ll s}\GG_s^{(n),\dd} \d s\le
\ff{1}{2}\int_0^t\e^{-2\ll s}\GG_s^{(n),\dd}\d
s+9\sum_{i=1,i\neq3}^9\int_0^t\e^{-2\ll s}\Lambda_s^{(i)}\d s.
\end{equation*}
We therefore obtain that
\begin{equation}\label{Gamma^n}
\int_0^t\e^{-2\ll s}\GG_s^{(n),\dd} \d s\le 18\sum_{i=1,i\neq3}^9\int_0^t\e^{-2\ll s}\Lambda_s^{(i)}\d s.
\end{equation}
 To achieve the desired assertion \eqref{eq12}, in the sequel, we aim
to estimate the terms $\Lambda_t^{(i)}, i\neq3,$ step-by-step. By
the contraction property of $\e^{tA}$  and thanks to \eqref{eq7} and
$x\in\D(A)$,
\begin{equation*}
\Lambda_t^{(1)}+\Lambda_t^{(2)}\1\|x-x_n\|_\H^2
\1\ff{1}{\ll_n^2}\sum_{i=n+1}^\8\ll_i^2\<x,e_i\>^2\1\ff{\|Ax\|_\H^2}{\ll_n^2}\1\ff{1}{\ll_n^2}.
\end{equation*}
Taking  H\"older's inequality and \eqref{eq5} with $\gg=\aa\vv/2$
into consideration and taking advantage of  contractive property of
$\e^{tA}$ and $\|b\|_{T,\8}<\8$ as well as {\bf(A3)}, we deduce from
$\aa\vv\in(0,1)$ that
\begin{equation*}
\begin{split}
\Lambda_t^{(4)}
&\1_T\int_0^t\E\|b_{s_\dd}(Y^{(n),\dd}_{s_\dd})-b_s(Y^{(n),\dd}_{s_\dd})\|_\H^2\d
s+\int_0^t\E\|b_s(Y^{(n),\dd}_{s_\dd})-b_s(Y^{(n),\dd}_s)\|_\H^2\d
s\\
&\quad+\int_0^t\E\|\e ^{(t-s_\dd)A}\{\pi_n-{\bf
I}\}b_s(Y^{(n),\dd}_s)\|_\H^2\d s+\int_0^t\E\|\e
^{(t-s)A}\{\e^{(s-s_\dd)A}-{\bf
I}\}b_s(Y^{(n),\dd}_s)\|_\H^2\d s\\
&\1_T\dd^{\aa\vv}+\sum_{i=n+1}^\8\int_0^t\e^{-2\ll_i(t-s_\dd)}\E\<b_s(Y^{(n),\dd}_s),e_i\>^2\d
s\\
&\quad+\int_0^t\|\e
^{(t-s)A}(-A)^{\aa\vv/2}\|^2\|(-A)^{-\aa\vv/2}\{\e^{(s-s_\dd)A}-{\bf
I}\}\|^2\E\|b_s(Y^{(n),\dd}_s)\|_\H^2\d s\\
&\1_T\dd^{\aa\vv}+\int_0^t\e^{-2\ll_n(t-s_\dd)}\E\|b_s(Y^{(n),\dd}_s)\|_\H^2\d
s+\dd^{\aa\vv}\int_0^t(t-s)^{-\aa\vv}\E\|b_s(Y^{(n),\dd}_s)\|_\H^2\d s\\
&\1_T\dd^{\aa\vv}+\ff{1}{\ll_n}.
\end{split}
\end{equation*}
  By the aid  of Jensen's inequality, in
addition to \eqref{e2}, \eqref{u1-0}, \eqref{q1}, \eqref{eq5},
\eqref{eq7}, contractive property of $\e^{tA}$ and $\|b\|_{T,\8}<\8$
along with {\bf(A3)} yield that
\begin{align*}
\Lambda_t^{(5)}
&\1\int_{0}^{t}\E\|b_{s_\dd}(Y^{(n),\dd}_{s_\dd})-b_{s_\dd}(Y^{(n),\dd}_s)\|^2_\H\d
s\\
&\quad+\int_{0}^{t}\E\|b_{s_\dd}(Y^{(n),\dd}_s)-b_s(Y^{(n),\dd}_s)\|^2_\H\d
s\\
&\quad+\int_{0}^{t}\E\|\nn
u_s^\ll(Y^{(n),\dd}_s)(-A)^{\aa\vv/2}\|^2\|(-A)^{-\aa\vv/2}(\e^{(s-s_\dd)A}-{\bf
I})\|^2\d
s\\
&\quad+\int_{0}^{t}\E\|\nn
u_s^\ll(Y^{(n),\dd}_s)(-A)^{\nu}\|^2\|(-A)^{-\nu}(\pi_n-{\bf
I})\|^2\d s\\
&\1\dd^{\aa\vv} +\ff{1}{\ll_n^{2\nu}},
\end{align*}
where we have also used
$\ff{1}{2}>\nu=\ff{\vv+2\bb\wedge\aa\vv^2}{2}+\aa-1>0$.
We thus obtain from
 \eqref{w3} that
\begin{equation}\label{eq9}
\begin{split}
\Lambda_t^{(6)}&\1\int_0^t\E\Big(\sum_{i=1}^n(1-\e^{-\ll_i(s-s_\dd)})^2
\|(\nabla_{e_i}^2u^\lambda_s
)(Y^{(n),\dd}_s)\|_\H\Big)^2\d s\\
&\1\dd^{2\nu}\int_0^t\E\Big(\sum_{i=1}^n\ll_i^{\nu}
\|(\nabla_{e_i}^2u^\lambda_s
)(Y^{(n),\dd}_s)\|_\H\Big)^2\d s\\
&\1\dd^{2\nu},
\end{split}
\end{equation}
where we have used  $\sup_{x>0}\{(1-\e^{-x})x^{-\nu}\}<\8$ for
$\nu\in(0,1)$. Also, with the help of \eqref{w3},
\begin{equation}\label{eq10}
\begin{split}
\Lambda_t^{(7)}&\1\int_0^t\E\Big(\sum_{i=n+1}^\infty\|(\nabla^{2}_{e_{i}}u^\lambda_{s})(Y^{(n),\dd}_s)\|_\H\Big)^2\d
s\\
&\1\ff{1}{\ll_n^{2\nu}}\int_0^t\E\Big(\sum_{i=n+1}^\infty\ll_i^{\nu}\|(\nabla^{2}_{e_{i}}u^\lambda_{s})(Y^{(n),\dd}_s)\|_\H\Big)^2\d
s\\
&\1\ff{1}{\ll_n^{2\nu}}.
\end{split}
\end{equation}
Employing  It\^o's isometry, \eqref{eq1} and \eqref{a5} with
$\theta=\aa/2$, we get that
\begin{equation*}
\begin{split}
\Lambda_t^{(8)}&\1\sum_{i=n+1}^\8 \int_{0}^{t}\e^{-2\ll_i(t-s)}\d
s+\sum_{i=1}^n\int_0^t\e^{-2\ll_i(t-s)}(1-\e^{-\ll_i(s-s_\dd)})^2\d s\\
&\1\sum_{i=n+1}^\8\int_0^t\e^{-2\ll_is}\d s+\dd^\aa\sum_{i=1}^n\ll_i^\aa\int_0^t\e^{-2\ll_is}\d s\\
&\1\sum_{i=n+1}^\8\ff{1}{\ll_i}+\dd^\aa\sum_{i=1}^n\ff{1}{\ll_i^{1-\aa}}\\
&\1\ff{1}{\ll_n^\aa}+\dd^\aa.
\end{split}
\end{equation*}
Let
\begin{equation*}
\Theta_t=\int_0^t\|\e^{(t-r)A}\|^2_{\mathrm{HS}}\E\|(\nn
u^\ll_r)(X_r)-(\nn u^\ll_r)(Y^{(n),\dd}_r)\|^2\d r.
\end{equation*}
Since $0<\nu<\ff{1}{2}\wedge\aa$, again, an application of It\^o's
isometry, together with  \eqref{a5} with $\theta=\nu/2$, implies
that
\begin{align*}
\Lambda_t^{(9)} &\le c_1\Big\{\Theta_t+\sum_{i=1}^n
\int_0^t(1-\e^{-\ll_i(r-r_\dd)})^2\E\|(\nn_{e_i}
u^\ll_r)(Y^{(n),\dd}_r)\|^2_\H\d r\\
&\quad+\sum_{i=n+1}^\8\int_0^t\E\|(\nn_{e_i}
u^\ll_r)(X_r)\|_\H^2\d r\Big\}\\
&\le c_2\Big\{\Theta_t+\dd^{\nu}\sum_{i=1}^n\ll_i^{\nu}
\int_0^t\E\|(\nn_{e_i}
u^\ll_r)(Y^{(n),\dd}_r)\|^2_\H\d r\\
&\quad+\ll_n^{-\nu}\sum_{i=n+1}^\8\ll_i^{\nu}\int_0^t\E\|(\nn_{e_i}
u^\ll_r)(X_r)\|_\H^2\d r\Big\}\\
&\le c_3\{\Theta_t+\dd^{\nu}+\ll_n^{-\nu}\}
\end{align*}
for some constants $c_1,c_2,c_3>0$, where we have used    \eqref{w1}
with $\theta=\nu$ in the last procedure. By means of H\"older's
inequality, \eqref{eq1} and \eqref{a5} with $\theta=\ff{\aa}{2}$, we
find that
\begin{equation}\label{w2}
\begin{split}
\int_0^t\e^{-2\lambda s}\|\e^{sA}\|^2_{\mathrm{HS}}\d
s&=\sum_{i=1}^\8\int_0^t\e^{-2\lambda s}\e^{-2\ll_is} \d
s\\
&\le\sum_{i=1}^\8\Big(\int_0^t\e^{-\ff{(2-\aa)\ll_is}{1-\aa}} \d
s\Big)^{\ff{2(1-\aa)}{2-\aa}}\Big(\int_0^t\e^{-\ff{2(2-\aa)\ll s}{\aa}}\d s\Big)^{\ff{\aa}{2-\aa}}\\
&=\sum_{i=1}^\8\Big(\ff{1-\e^{-\ff{(2-\aa)\ll_it}{1-\aa}}}{\ff{(2-\aa)\ll_i}{1-\aa}}\Big)^{\ff{2(1-\aa)}{2-\aa}}\Big(\ff{\aa}{2(2-\aa)\ll}\Big)^{\ff{\aa}{2-\aa}}\\
&\le   c_4\,\ll^{-\ff{\aa}{2-\aa}}
\end{split}
\end{equation}
for some constant $  c_4>0$. Next, via Fubini's theorem, \eqref{eq7}
and \eqref{w2}, we deduce that
\begin{equation*}
\begin{split}
\int_0^t\e^{-2\lambda s}\Theta_s\d s &=\int_0^t\e^{-2\lambda
r}\E\|(\nn u^\ll_r)(X_r)-(\nn
u^\ll_r)(Y^{(n),\dd}_r)\|^2\left(\int_0^{t-r}\e^{-2\lambda s}\|\e^{sA}\|^2_{\mathrm{HS}}\d s\right)\d r\\
&\le c_4\,\ll^{-\ff{\aa}{2-\aa}}\int_0^t\e^{-2\lambda r}\E\|(\nn
u^\ll_r)(X_r)-(\nn
u^\ll_r)(Y^{(n),\dd}_r)\|^2\d r\\
& \le \ff{c_4}{18\ll^{\ff{\aa}{2-\aa}} }\int_0^t\e^{-2\lambda
s}\Gamma_s^{(n),\dd}\d s.
\end{split}
\end{equation*}
Furthermore, taking $\ll\ge\hat\ll_T$ such that $
\ff{c_3c_4}{\ll^{\ff{\aa}{2-\aa}} }<1$ and
combining all the  estimates above with \eqref{Gamma^n}, we arrive
at
\begin{equation*}\begin{split}
\int_0^t\e^{-2\lambda s}\Gamma_s^{(n),\dd}\d s
&\1_T\dd^{\nu}+\ff{1}{\ll_n^{\nu}},
\end{split}
\end{equation*}
where we have also utilized $\nu\in(0,\aa)$ and $\nu\in(0,\aa\vv)$.
This  therefore implies the desired assertion.

\paragraph{Acknowledgement.} The author would like to thank Professor Feng-Yu Wang for corrections and helpful comments.

\beg{thebibliography}{99}

{\small

\setlength{\baselineskip}{0.14in}
\parskip=0pt

\bibitem{BHY}Bao, J., Huang, X., Yuan, C., Convergence Rate of Euler-Maruyama Scheme for SDEs with Rough
Coefficients,  arXiv:1609.06080.

\bibitem{BL}
 Barth, A., Lang, A., $L^p$ and almost sure convergence of a Milstein scheme for stochastic partial differential equations,
  {\it Stochastic Process. Appl.}, {\bf123} (2013), 1563--1587.




\bibitem{D11} Debussche, A., Weak approximation of stochastic partial
differential equations: the nonlinear case, {\it Math. Comp.},
{\bf273} (2011), 89--117.

\bibitem{GZ}  Da Prato, G., Zabczyk, J., \emph{Stochastic Equations in Infinite Dimensions,} Cambridge University Press, Cambridge, 1992.

\bibitem{DJM} Da Prato,  G.,  Jentzen, A.,    R\"ockner, M, A mild It\^o formula for
SPDEs, arXiv:1009.3526v4.








\bibitem{GS}  Gy\"ongy, I.,   Sabanis, S.,   \u{S}i\u{s}ka, Convergence of tamed Euler schemes for a class
of stochastic evolution equations, {\it Stoch PDE: Anal. Comp.},
{\bf 4} (2016), 225--245.

\bibitem{G03}Gy\"ongy, I.,   Krylov, N., On the splitting-up method and
stochastic partial differential equations, {\it Ann. Probab.},
{\bf31} (2003), 564--591.

\bibitem{H03}Hausenblas, E., Approximation for semilinear stochastic evolution
equations, {\it Potential Anal.} {\bf 18} (2003), 141--186.

\bibitem{JP} Jentzen, A., Pu\u{s}nik, P., Strong convergence rates for an explicit numerical approximation method for stochastic evolution equations with
non-globally Lipschitz continuous nonlinearities, arXiv:1504.03523.

\bibitem{JR15} Jentzen, A.,    R\"ockner, M., A Milstein Scheme for
SPDEs, {\it Found Comput Math}, {\bf15} (2015), 313--362.

\bibitem{J11}  Jentzen, A., Kloeden, P.~E., Winkel, G., Efficient simulation of nonlinear parabolic SPDEs with additive
noise,
 {\it Ann. Appl. Probab.}, {\bf21} (2011), 908--950.

\bibitem{JK11}Jentzen, A., Kloeden, P.~ E., Taylor approximations for
stochastic partial differential equations,   Philadelphia, PA, 2011.


\bibitem{JK} Jentzen, A., Kloeden, P.~E., Overcoming the order barrier in the numerical approximation of stochastic partial
 differential equations with additive space-time noise,
 {\it Proc. R. Soc. A}, {\bf465} (2009),  649--667.

\bibitem{KLNS} Kloeden, P.~E.,   Lord, G.~J., Neuenkirch,  A.,  Shardlow, T.,
 The exponential integrator scheme for stochastic partial
differential equations: Pathwise error bounds, {\it J. Comput. Appl.
Math.}, {\bf235} (2011),  1245--1260.

\bibitem{K}
 Kruse, R., Optimal error estimates of Galerkin finite element methods for stochastic partial differential equations with multiplicative
 noise,
  {\it IMA J. Numer. Anal.}, {\bf34} (2014),  217--251.

\bibitem{LCP}
Lang, A., Chow, P.- L.,  Potthoff, J., Almost sure convergence for a
semidiscrete Milstein scheme for SPDEs of Zakai type, {\it
Stochastics}, {\bf82}  (2010), 315--326.

\bibitem{GR}  Lord, G.~J.,  Rougemont, J., A Numerical Scheme for Stochastic PDEs with Gevrey Regularity, {\it IMA J. Numer. Anal.},
{\bf 24} (2004), 587--604.

\bibitem{GT} Lord,  G.~J.,    Shardlow, T.,   Postprocessing for stochastic parabolic partial differential equations,
{\it SIAM J. Numer. Anal.},{\bf 45} (2007), 870--899.

\bibitem{S99}Shardlow, T., Numerical methods for stochastic parabolic PDEs, {\it Numer. Funct. Anal. Optim.}, {\bf20}  (1999), 121--145.

\bibitem{W} Wang, F.-Y.,
 Gradient estimate and applications for SDEs in Hilbert space with
multiplicative noise and Dini continuous drift,  {\it J.
Differential Equations}, {\bf260} (2016),  2792--2829.

}
\end{thebibliography}

\end{document}